\documentclass[12pt, reqno]{amsart}
\usepackage{amssymb, amsthm, fullpage}

\newtheorem{thm}{Theorem}[section]

\newtheorem{lemma}[thm]{Lemma}
\newtheorem{theorem}[thm]{Theorem}

\numberwithin{equation}{section}

\theoremstyle{definition}
\newtheorem{remark}[thm]{Remark}

\renewcommand{\qed}{\rule{3mm}{3mm}}
\renewenvironment{proof}
    {\vspace{1mm}\noindent\textbf{Proof.}}
    {\hspace*{\fill} $\qed$\vspace{1mm}}

\newcommand{\bR}{\mathbb R}
\renewcommand{\Bbb}{\mathbb}
\DeclareMathOperator{\ddiv}{div}

\usepackage{xcolor}

\begin{document}
\title[Monotonic convergence for quasilinear elliptic systems]{Monotonic convergence of
positive radial solutions for general quasilinear elliptic systems}
\author{Daniel Devine}
\author{Paschalis Karageorgis}
\address{School of Mathematics, Trinity College Dublin, Ireland.}
\email{dadevine@tcd.ie} \email{pete@maths.tcd.ie}

\keywords{Asymptotic behavior, elliptic systems, $p$-Laplace operator, radial solutions.}
\subjclass[2020]{35B40, 35J47, 35J92}

\begin{abstract}
We study the asymptotic behavior of positive radial solutions for quasilinear elliptic systems that
have the form
\begin{equation*}
\left\{ \begin{aligned}
\Delta_p u &= c_1|x|^{m_1} \cdot g_1(v) \cdot |\nabla u|^{\alpha} &\quad\mbox{ in } \mathbb R^n,\\
\Delta_p v &= c_2|x|^{m_2} \cdot g_2(v) \cdot g_3(|\nabla u|) &\quad\mbox{ in } \mathbb R^n,
\end{aligned} \right.
\end{equation*}
where $\Delta_p$ denotes the $p$-Laplace operator, $p>1$, $n\geq 2$, $c_1,c_2>0$ and $m_1, m_2, \alpha
\geq 0$.  For a general class of functions $g_j$ which grow polynomially, we show that every
non-constant positive radial solution $(u,v)$ asymptotically approaches $(u_0,v_0) = (C_\lambda
|x|^\lambda, C_\mu |x|^\mu)$ for some parameters $\lambda,\mu, C_\lambda, C_\mu>0$.  In fact, the
convergence is monotonic in the sense that both $u/u_0$ and $v/v_0$ are decreasing.  We also obtain
similar results for more general systems.
\end{abstract}

\maketitle

\section{Introduction}
We study the positive radial solutions of the quasilinear elliptic system
\begin{equation} \label{sys|1}
\left\{ \begin{aligned}
\Delta_p u &= c_1|x|^{m_1} \cdot g_1(v) \cdot |\nabla u|^{\alpha} &\quad\mbox{ in } \Omega, \\
\Delta_p v &= c_2|x|^{m_2} \cdot g_2(v) \cdot g_3(|\nabla u|) &\quad\mbox{ in } \Omega,
\end{aligned} \right.
\end{equation}
where $\Delta_p u = \ddiv(|\nabla u|^{p-2}\nabla u)$ denotes the $p$-Laplace operator, $p>1$, $n\geq 2$,
$c_1,c_2>0$ and $m_1, m_2, \alpha \geq 0$.  We are mainly concerned with the asymptotic behavior of
global solutions in the case $\Omega = \mathbb R^n$, but we shall also consider local solutions over the
open ball $\Omega = B_R$ around the origin.  For the functions $g_j$, we impose some generic conditions
that we outline below.  These allow each $g_j$ to be an arbitrary polynomial with non-negative
coefficients.

Semilinear elliptic systems that do not involve any gradient terms have been extensively studied in the
past \cite{BVP2001, CR2002, GMRZ2009, L2011, LW1999, LW2000} and the same is true for quasilinear
elliptic systems without gradient terms \cite{ACM2002,BV2000, BVG2010, BVG1999, BVP2001, CFMT2000,
F2011}.  On the other hand, systems such as \eqref{sys|1} have also attracted interest in recent years
\cite{DLS2005, F2013, FV2017, GGS2019, S2015}.  Their study was initiated by D\'iaz, Lazzo and Schmidt
\cite{DLS2005} who concentrated on the special case
\begin{equation*}\label{sysDLS}
\left\{ \begin{aligned}
\Delta u &=v &&\quad\mbox{ in } \Omega,\\
\Delta v &=|\nabla u|^2 &&\quad\mbox{ in } \Omega.
\end{aligned} \right.
\end{equation*}
This system arises through a prototype model for a viscous, heat-conducting fluid, and its solutions
correspond to steady states of a related parabolic system that describes the unidirectional flow of the
fluid.

More recently, Singh \cite{S2015} considered general semilinear systems of the form
\begin{equation*}\label{sysG}
\left\{ \begin{aligned}
\Delta u &= v^m &&\quad\mbox{ in } \Omega,\\
\Delta v &= g(|\nabla u|)&&\quad\mbox{ in } \Omega,
\end{aligned} \right.
\end{equation*}
where $m>0$ and $g\in C^1([0,\infty))$ is non-decreasing and positive in $(0,\infty)$. He found optimal
conditions for the existence of positive radial solutions in either $\Omega=B_R$ or $\Omega = \bR^n$,
and he determined the asymptotic behavior of global solutions, assuming that $g(s) = s^k$ for some
$k\geq 1$ and that $k,m$ satisfy some additional hypotheses.

The existence results of \cite{S2015} were extended by Filippucci and Vinti \cite{FV2017} to the
$p$-Laplace operator for any $p\geq 2$.  Ghergu, Giacomoni and Singh \cite{GGS2019} then studied the
case
\begin{equation} \label{sysGGS}
\left\{ \begin{aligned}
\Delta_{p} u &= v^{k_1} \cdot |\nabla u|^{\alpha} &\quad\mbox{ in } \Omega, \\
\Delta_{p} v &= v^{k_2} \cdot |\nabla u|^{k_3} &\quad\mbox{ in } \Omega,
\end{aligned} \right.
\end{equation}
where $k_1,k_2,k_3,\alpha \geq 0$.  Once again, optimal results were established for the existence of
positive radial solutions, but the asymptotic behavior of global solutions was now settled under the
very natural assumptions
\begin{equation} \label{excon}
0\leq \alpha < p-1, \qquad (p-1-\alpha)(p-1-k_2)>k_1k_3
\end{equation}
which are related to the existence of solutions.  Assuming that \eqref{excon} holds, in particular, one
can easily check that the system \eqref{sysGGS} has an explicit solution of the form
\begin{equation} \label{special}
u_0(r) = C_\lambda r^\lambda = C_\lambda |x|^\lambda, \qquad
v_0(r) = C_\mu r^\mu = C_\mu |x|^\mu
\end{equation}
for some $\lambda,\mu, C_\lambda, C_\mu>0$.  According to \cite[Theorem 2.3]{GGS2019}, all non-constant
positive radial solutions of \eqref{sysGGS} in $\Omega = \bR^n$ have the same behavior at infinity, and
they satisfy
\begin{equation} \label{conv}
\lim_{r\to \infty} \frac{u(r)}{u_0(r)} = \lim_{r\to \infty} \frac{v(r)}{v_0(r)} = 1.
\end{equation}

Our first goal in this paper is to further extend \cite[Theorem 2.3]{GGS2019} to a much more general
class of quasilinear elliptic systems.  Instead of the system \eqref{sysGGS} that was considered in
\cite{GGS2019}, we shall thus study the system \eqref{sys|1} which contains additional factors and more
general nonlinearities. When it comes to the functions $g_j$, we shall assume that
\begin{itemize}
\item[(A1)]
each $g_j$ is differentiable and non-decreasing in $[0,\infty)$;
\item[(A2)]
each $g_j$ is positive in $(0,\infty)$ and $g_1,g_2$ are increasing in $(0,\infty)$;
\item[(A3)]
there exist constants $k_j\geq 0$ such that $g_j(s)/s^{k_j}$ is non-increasing in $(0,\infty)$ with
\begin{equation} \label{limit}
\lim_{s\to \infty} \frac{g_j(s)}{s^{k_j}} = 1 \qquad \text{for each $j=1,2,3$.}
\end{equation}
\end{itemize}
Needless to say, system \eqref{sysGGS} arises when $g_j(s)= s^{k_j}$ for each $j$, but our assumptions
allow much more general functions.  For instance, each $g_j(s)$ could be any sum of non-negative powers
of $s$ with non-negative coefficients, as long as the highest power is $s^{k_j}$. In all these cases, we
show that non-constant positive radial solutions of \eqref{sys|1} in $\Omega = \mathbb R^n$ exhibit the
same asymptotic behavior as the corresponding solutions of the limiting system
\begin{equation} \label{sys|2}
\left\{ \begin{aligned}
\Delta_p u &= c_1|x|^{m_1} \cdot v^{k_1} \cdot |\nabla u|^{\alpha} &\quad\mbox{ in } \Omega, \\
\Delta_p v &= c_2|x|^{m_2} \cdot v^{k_2} \cdot |\nabla u|^{k_3} &\quad\mbox{ in } \Omega,
\end{aligned} \right.
\end{equation}
and that they all approach $(u_0,v_0)= (C_\lambda r^\lambda, C_\mu r^\mu)$ for some parameters
$\lambda,\mu, C_\lambda, C_\mu>0$.

Our second goal in this paper is to show that the convergence \eqref{conv} actually occurs in a
monotonic fashion. In fact, we shall prove that both $u/u_0$ and $v/v_0$ are decreasing for any positive
radial solution $(u,v)$ which satisfies \eqref{sys|1} in $\Omega = B_R$.  This observation appears to be
entirely new, in any case whatsoever, and it is a key ingredient in our approach.

When it comes to the results of \cite{GGS2019, S2015}, the authors established \eqref{conv} using a
nonlinear change of variables and the theory of cooperative dynamical systems.  In this paper, we
introduce a new linear change of variables which seems more natural, and we apply simple continuity
arguments to prove the monotonicity of $u/u_0$ and $v/v_0$ directly.

\begin{thm} \label{MainThm}
Assume (A1)-(A3) and \eqref{excon}.  Let $(u,v)$ be a non-constant positive radial solution of the
system \eqref{sys|1}, where $n\geq 2$, $m_1, m_2 \geq 0$ and $c_1, c_2 > 0$. Then the associated system
\eqref{sys|2} has a unique solution of the form $(u_0,v_0) = (C_\lambda r^\lambda, C_\mu r^\mu)$, where
each of the parameters $\lambda, \mu, C_\lambda, C_\mu$ is positive.

\begin{itemize}
\item[(a)]
If $\Omega = B_R$, then $\frac{u(r)}{u_0(r)}$, $\frac{v(r)}{v_0(r)}$, $\frac{u'(r)}{u_0'(r)}$,
$\frac{v'(r)}{v_0'(r)}$ are all decreasing for each $0<r<R$.

\item[(b)]
If $\Omega = \mathbb R^n$, then $(u,v)$ and $(u_0,v_0)$ have the same behavior at infinity, namely
\begin{equation} \label{asym}
\lim_{r\to \infty} \frac{u(r)}{u_0(r)} = \lim_{r\to \infty} \frac{v(r)}{v_0(r)} =
\lim_{r\to \infty} \frac{u'(r)}{u_0'(r)} = \lim_{r\to \infty} \frac{v'(r)}{v_0'(r)} = 1.
\end{equation}
\end{itemize}
\end{thm}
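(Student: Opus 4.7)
The plan breaks into three parts matching the three assertions of the theorem.

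\textbf{Part 1: Construction of $(u_0,v_0)$.} Substituting the ansatz $u_0(r)=C_\lambda r^\lambda$, $v_0(r)=C_\mu r^\mu$ into the limiting system \eqref{sys|2} and using the radial form of the $p$-Laplacian, matching powers of $r$ on both sides yields a $2\times 2$ linear system for $(\lambda,\mu)$ with determinant $(p-1-\alpha)(p-1-k_2)-k_1k_3$. This is positive by \eqref{excon}, so $(\lambda,\mu)$ is uniquely determined, and a direct check shows both are positive. Equating the remaining coefficients then determines $C_\lambda,C_\mu$ uniquely, and they are positive since $c_1,c_2>0$.

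\textbf{Part 2: Monotonicity on $B_R$.} Reducing \eqref{sys|1} to ODEs via radial symmetry, I work with the quantities $P(r)=r^{n-1}|u'|^{p-2}u'$ and $Q(r)=r^{n-1}|v'|^{p-2}v'$, satisfying $P'(r)=c_1 r^{n-1+m_1}g_1(v)(u')^\alpha$ and $Q'(r)=c_2 r^{n-1+m_2}g_2(v)g_3(u')$, together with their counterparts $P_0,Q_0$ associated to $(u_0,v_0)$. A local analysis of the ODEs at $r=0$, using that $u(0),v(0)>0$ while $u'(0)=v'(0)=0$, shows that each of the four ratios $u/u_0$, $v/v_0$, $u'/u_0'$, $v'/v_0'$ tends to $+\infty$ as $r\to 0^+$. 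The key step is a continuity argument via the linear change of variables advertised in the introduction: assume, for contradiction, that one of these four ratios fails to be strictly decreasing, and let $r_*>0$ be the smallest radius at which the derivative of any one of them vanishes. At $r_*$, the other three ratios are still strictly decreasing, so inserting these strict inequalities into the identity expressing $(\log \rho)'(r_*)$ in terms of $P/P_0$ or $Q/Q_0$ (via the ODEs) and using assumption (A3)---which gives both $g_j(s)\geq s^{k_j}$ and a precise comparison of $g_j(v)/g_j(v_0)$ with $(v/v_0)^{k_j}$---forces a strict inequality that is incompatible with the critical-point assumption.

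\textbf{Part 3: Limits at infinity.} For $\Omega=\mathbb{R}^n$, part (a) guarantees that each of the four ratios is positive and decreasing on $(0,\infty)$, hence converges to some limit $L\geq 0$. If $L>1$, the resulting lower bound $u\geq L'u_0$ (with $1<L'<L$) and analogously for $v$ feed back into \eqref{sys|1} and, via the limit $g_j(s)/s^{k_j}\to 1$, force the right-hand side of \eqref{sys|1} to eventually dominate that of \eqref{sys|2}; integration then makes $u$ grow strictly faster than $u_0$, contradicting boundedness of $u/u_0$. A dual argument, using that the ratios $g_j(s)/s^{k_j}$ are bounded above near $+\infty$, rules out $L<1$.

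The main obstacle is Part 2. The four ratios are coupled in a highly nonlinear way through $g_1,g_2,g_3$ and the gradient factors $|u'|^\alpha$ and $g_3(|u'|)$, and the derivative of $u/u_0$ involves $u'/u_0'$, which in turn couples to $v/v_0$ through the first equation of \eqref{sys|1}. Consequently, one cannot isolate $u/u_0$ or $v/v_0$ alone; the four ratios must be handled simultaneously, and the delicate point is to select the correct $r_*$ among the four candidates so that the comparison with the exact solution $(u_0,v_0)$ yields a genuine contradiction. This is precisely why (A3) insists on the non-increasing character of $g_j(s)/s^{k_j}$, and why the theorem asserts monotonicity for all four ratios rather than just for $u/u_0$ and $v/v_0$.
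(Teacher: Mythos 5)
Your Part 1 reproduces the paper's computation, and your overall architecture (explicit power solution $(u_0,v_0)$, then simultaneous monotonicity of the four ratios, then limits at infinity) is the same as the paper's. The genuine gap is in Part 2, which you correctly identify as the crux but do not actually carry out: a pointwise contradiction at the first critical radius $r_*$ does not close. The first-order identities linking the ratios, e.g.
\begin{equation*}
\frac{v_0(r)}{v_0'(r)}\,\mathcal V'(r) + \mathcal V(r) = \mathcal Y(r), \qquad \mathcal V = \frac{v}{v_0}, \quad \mathcal Y = \frac{v'}{v_0'},
\end{equation*}
are perfectly consistent with $\mathcal V'(r_*)=0$: they merely give $\mathcal V(r_*)=\mathcal Y(r_*)$, which does not contradict the inequality $\mathcal Y\le \mathcal V$ available on $(0,r_*]$. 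The same is true for the equation satisfied by $\mathcal W=u'/u_0'$, namely $\frac{A}{A'}\left[\mathcal W^{p-1-\alpha}\right]'+\mathcal W^{p-1-\alpha}=Q_1(\mathcal V,v_0)$ with $A=r^\delta (u_0')^{p-1-\alpha}$ and $Q_1(s,t)=g_1(st)/t^{k_1}$: nothing about the strict inequalities at $r_*$ forbids $\left[\mathcal W^{p-1-\alpha}\right]'(r_*)=0$. The paper's mechanism is integrated rather than pointwise: on the maximal interval where $\mathcal V'<0$, the right-hand side $Q_1(\mathcal V,v_0)$ is decreasing (this uses (A3) in quotient form \emph{and} $\mathcal V>1$, which requires a separate comparison lemma giving $u>u_0$, $v>v_0$, $u'>u_0'$, $v'>v_0'$ that your outline never establishes); hence the whole left-hand side is decreasing, and multiplying by $A$ shows that the weighted quantity $\frac{A^2}{A'}\left[\mathcal W^{p-1-\alpha}\right]'$ is itself decreasing. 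Since its limit at $r=0^+$ is $0$ (resp.\ $-u(0)<0$, $-v(0)<0$ for the analogous quantities attached to $\mathcal U$, $\mathcal V$), it stays negative, which gives $\mathcal W'<0$, then $\mathcal U'<0$, then $\mathcal Y'<0$, and finally shows that $v_0^2\mathcal V'/v_0'$ is decreasing and strictly negative, so $\mathcal V'$ cannot vanish at the endpoint of the maximal interval. Without some such monotone weighted quantity your first-critical-point scheme stalls; moreover your claim that all four ratios blow up at $r=0^+$ is neither needed nor true in general for the derivative ratios, and in any case a large initial value does not prevent a ratio from attaining an interior minimum.

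Part 3 is closer to workable but also incomplete as stated. Integrating "the dominating right-hand side" only yields one-sided relations such as $\mathcal W_\infty^{p-1-\alpha}\ge \mathcal V_\infty^{k_1}$ and $\mathcal Y_\infty^{p-1}\ge \mathcal V_\infty^{k_2}\mathcal W_\infty^{k_3}$; the paper obtains the reverse directions as exact limiting identities by applying L'H\^opital's rule twice (so that $\mathcal U_\infty=\mathcal W_\infty$, $\mathcal V_\infty=\mathcal Y_\infty$) and passing to the limit in the second-derivative quotients, after which \eqref{excon} together with $\mathcal V_\infty\ge 1$ forces all limits to equal $1$. Note also that $L<1$ is excluded for free by the comparison lemma mentioned above, so the "dual argument" you invoke is unnecessary once that lemma is in place.
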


\begin{remark}
Although we have mainly focused on the system \eqref{sys|1}, our method of proof applies verbatim for
several variations of that system.  When it comes to the first equation, for instance, one may replace
the factor $g_1(v)$ by $g_1(v) \cdot g_4(u)$, where $g_4$ is also subject to the assumptions (A1)-(A3).
When it comes to the second equation, one may similarly replace $g_2(v)$ by $g_2(v)\cdot g_5(u)$, and
also include the factor $|\nabla v|^\beta$, where $\beta\geq 0$.  All these variations can be treated
using the same approach by merely adjusting the existence condition \eqref{excon}.  For the sake of
simplicity, however, we shall not bother to treat them explicitly.
\end{remark}

As we show in Sections \ref{Sec3} and \ref{Sec4}, Theorem \ref{MainThm} is actually valid for more
general systems obtained by replacing the coefficients $c_1|x|^{m_1}$, $c_2|x|^{m_2}$ in \eqref{sys|1}
by arbitrary non-decreasing functions $f_1(|x|)$, $f_2(|x|)$.  In that case, however, the asymptotic
profile $(u_0,v_0)$ does not have the form $(C_\lambda |x|^\lambda, C_\mu |x|^\mu)$, and it is not
necessarily an elementary function.  We shall mostly concentrate on the general case \eqref{Sys1} and
then deduce Theorem \ref{MainThm} as a special case.

To some extent, Theorem \ref{MainThm} is reminiscent of Wang's classical result \cite{W1993} regarding
the positive radial solutions of the scalar equation $-\Delta u = u^p$, where $p>1$.  This equation has
decaying solutions of the form $u_0 = C_\lambda |x|^\lambda$, where $\lambda=-2/(p-1)$, and every
positive radial solution $u$ behaves like $u_0$ at infinity.  However, the convergence $u/u_0\to 1$ is
monotonic only for supercritical powers \cite{KS2007, W1993}.  A similar result holds for the scalar
equation $\Delta^2 u = u^p$ which has decaying solutions of the form $u_0 = C_\lambda |x|^\lambda$ with
$\lambda= -4/(p-1)$. Once again, every positive radial solution $u$ behaves like $u_0$ at infinity, but
the convergence $u/u_0\to 1$ is monotonic only for supercritical powers $p$; see \cite{FGK2009, K2009}
for more details.  It seems quite interesting that similar results can also be obtained for systems.

The remaining sections are organized as follows. In Section \ref{Sec2}, we introduce our general system
\eqref{Sys1}, and we show that all positive radial solutions are increasing and convex. In Section
\ref{Sec3}, we prove a new Monotonic Comparison Theorem which is a natural extension of Theorem
\ref{MainThm}(a). In Section \ref{Sec4}, we prove a new Asymptotic Comparison Theorem which similarly
extends Theorem \ref{MainThm}(b).  The proof of Theorem \ref{MainThm} is then given in Section
\ref{Sec5}.

\section{Monotonicity and convexity of solutions} \label{Sec2}
In this section, we study the positive radial solutions of the general system
\begin{equation}\label{Sys1}
\left\{ \begin{aligned}
\Delta_p u &= f_1(|x|) \cdot g_1(v) \cdot |\nabla u|^{\alpha}
&\quad\mbox{ in } \Omega,\\
\Delta_p v &= f_2(|x|) \cdot g_2(v) \cdot g_3(|\nabla u|)
&\quad\mbox{ in } \Omega,
\end{aligned} \right.
\end{equation}
where $\Omega = B_R$ and $\alpha\geq 0$, for any functions $f_i, g_j$ that satisfy the following
hypothesis:

\begin{itemize}
\item[(B1)]
each $f_i, g_j$ is continuous, non-decreasing in $[0,\infty)$ and positive in $(0,\infty)$.
\end{itemize}

\noindent Our main goal is to show that all positive radial solutions have components $u,v$ which are
increasing and convex.  This observation will play a crucial role in our subsequent analysis.  Although
our proofs are very similar to the ones presented in \cite{GGS2019, S2015}, our system \eqref{Sys1} is
more general, so we also include the proofs for the sake of completeness.

\begin{lemma}[Monotonicity] \label{Mono}
Assume (B1), $n\geq 2$ and $\alpha \geq 0$.  Suppose that $(u,v)$ is a non-constant radial solution of
\eqref{Sys1} in $\Omega = B_R$ which is positive for all $0<r<R$.  Then one has $u'(r)>0$ and $v'(r)>0$
for all $0<r<R$.
\end{lemma}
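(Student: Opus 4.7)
The plan is to recast the PDE system as a pair of radial ODEs, integrate once to obtain non-negativity of the derivatives, and then upgrade this to strict positivity via a propagation-of-vanishing argument coupled with the non-constancy of $(u,v)$.

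Since $\Delta_p w = r^{1-n}(r^{n-1}|w'|^{p-2}w')'$ for a radial function $w$, I would multiply each equation of \eqref{Sys1} by $r^{n-1}$ and integrate from $0$ to $r$, using the regularity $u'(0) = v'(0) = 0$ at the origin. This yields the Volterra-type identity
\[
r^{n-1} |u'(r)|^{p-2} u'(r) = \int_0^r s^{n-1} f_1(s)\, g_1(v(s))\, |u'(s)|^{\alpha} \, ds,
\]
together with an analogous identity for $v$. Under (B1) every factor in the integrand is non-negative, so the right-hand sides are non-negative; hence $u'(r), v'(r)\ge 0$ throughout $(0,R)$. Moreover, setting $P(r) := r^{n-1}|u'|^{p-2}u'$ and $Q(r) := r^{n-1}|v'|^{p-2}v'$, both mass functions $P,Q$ are non-decreasing, non-negative, and vanish at the origin.

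For the strict positivity I would argue by contradiction. If $u'(r_1) = 0$ at some $r_1 \in (0,R)$, then $P(r_1) = 0$, and the monotonicity of $P$ together with $P(0) = 0$ forces $P \equiv 0$ on $[0,r_1]$; hence $u$ is constant on that interval. Substituting $u' \equiv 0$ into the first equation gives $0 = f_1(r) g_1(v(r))\cdot 0^{\alpha}$ on $(0,r_1)$, which already contradicts the positivity of $f_1 g_1(v)$ when $\alpha = 0$. For $\alpha > 0$ the first equation is vacuous, so I would substitute $u' \equiv 0$ into the second equation instead, reducing it to $\Delta_p v = f_2(r) g_2(v) g_3(0)$; then the monotonicity of $Q$, combined with whether $g_3(0)$ vanishes, forces $v$ to also be constant on $[0,r_1]$. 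Symmetrically, starting from $v'(r_1) = 0$ yields $v$ constant on $[0,r_1]$ via the monotonicity of $Q$, and then the second equation reduces to $f_2 g_2(v) g_3(|u'|) = 0$; since $f_2, g_2(v) > 0$ and $g_3 > 0$ on $(0,\infty)$, this forces $u' \equiv 0$ on $[0,r_1]$ as well.

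In each branch both $u$ and $v$ are constant on an initial interval $[0,r_1]$ with matched zero data $u'(r_1) = v'(r_1) = 0$, and a Cauchy continuation argument beyond $r_1$ then forces $(u,v) \equiv (u(0),v(0))$ on all of $[0,R)$, contradicting the hypothesis that $(u,v)$ is non-constant. The delicate point — and the main obstacle — lies in this final continuation step in the regime $0 < \alpha < p-1$: the radial ODE for $u$ fails to be Lipschitz in $u'$ at $u' = 0$, so standard uniqueness is not directly available. What rescues the argument is the simultaneous vanishing of both $u'$ and $v'$ at $r_1$, which feeds back through the coupling $g_3(|u'|)$ in the $v$-equation and rules out the otherwise possible ``kink-type'' departures from the constant state.
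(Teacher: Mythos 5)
Your proposal opens exactly as the paper does: pass to the radial form, integrate the divergence structure to obtain the Volterra identity
\[
r^{n-1}|u'(r)|^{p-2}u'(r) = \int_0^r s^{n-1} f_1(s)\,g_1(v(s))\,|u'(s)|^{\alpha}\,ds,
\]
and invoke (B1) to read off a sign. The paper's proof stops here: it simply asserts that the right-hand side is strictly positive for every $0<r<R$ and concludes $u'>0$, then says ``one may similarly argue'' for $v$. You have correctly observed that this assertion is only immediate when $\alpha=0$, where the integrand reduces to the strictly positive quantity $s^{n-1}f_1(s)g_1(v(s))$; for $\alpha>0$ the factor $|u'|^{\alpha}$ can vanish, so a priori one only gets $u'\ge 0$. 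Flagging this is fair, and your instinct to upgrade via a propagation-of-vanishing argument plus non-constancy is the natural one.

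The difficulty is that your fill-in does not actually close. In the branch $\alpha>0$ with $u'\equiv 0$ on $[0,r_1]$, you claim the second equation ``forces $v$ to also be constant on $[0,r_1]$,'' but that conclusion is only available when $g_3(0)=0$. If $g_3(0)>0$ the reduced equation $\Delta_p v = f_2(r)g_2(v)g_3(0)$ has strictly positive right-hand side, so the flux $Q(r)=r^{n-1}|v'|^{p-2}v'$ is strictly increasing from $0$ and $v$ is strictly increasing, not constant; the case analysis as written is simply wrong there, and your concluding remark about the coupling through $g_3(|u'|)$ rescuing the argument cannot help precisely because $g_3$ does not vanish at the origin. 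Moreover, even in the configuration where both components really are constant on $[0,r_1]$, the ``Cauchy continuation'' step is the heart of the matter and is left as a pointer: you rightly note that the first-order ODE for $P:=r^{n-1}|u'|^{p-2}u'$ has the non-Lipschitz form $P' = C(r)\,P^{\alpha/(p-1)}$ with exponent $\alpha/(p-1)<1$, so the zero solution is not isolated and kink-type departures from $P\equiv 0$ cannot be ruled out by standard uniqueness. Asserting that the coupling ``rules out'' such departures is a hope, not an argument, and as noted it fails in the $g_3(0)>0$ regime. So you have located a genuine subtlety that the paper's one-line assertion ``$\int_0^r\cdots\,ds>0$'' glosses over, but your attempted repair both misstates what happens when $g_3(0)>0$ and leaves the key continuation step unproved.
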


\begin{proof}
Since $(u,v)$ is a radial solution of \eqref{Sys1}, it must satisfy the system
\begin{equation} \label{sysrad1}
\left\{ \begin{aligned}
&\left[ r^{n-1} u'(r) |u'(r)|^{p-2} \right]' = r^{n-1} f_1(r) g_1(v(r)) \cdot |u'(r)|^{\alpha}
&&\text{for all $0<r<R$,} \\
&\left[ r^{n-1} v'(r) |v'(r)|^{p-2} \right]' = r^{n-1} f_2(r) g_2(v(r)) \cdot g_3(|u'(r)|)
&&\text{for all $0<r<R$,} \\
&u'(0)=v'(0)=0, \: u(r)>0, v(r)>0
&&\text{for all $0<r<R$.}
\end{aligned} \right.
\end{equation}
Once we integrate the first equation and recall our assumption (B1), we find that
\begin{equation*}
r^{n-1} u'(r) |u'(r)|^{p-2} = \int_0^r s^{n-1} f_1(s) g_1(v(s))\cdot |u'(s)|^{\alpha} \,ds > 0
\end{equation*}
for all $0<r<R$.  This gives $u'(r)>0$, and one may similarly argue that $v'(r)>0$.
\end{proof}

\begin{lemma}[Nonexistence] \label{Nonex}
Assume (B1), $n\geq 2$ and $\alpha \geq p-1$.  Then \eqref{Sys1} does not have any non-constant positive
radial solutions in $\Omega = B_R$ for any $R>0$.
\end{lemma}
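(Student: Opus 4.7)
The plan is to assume such a solution $(u,v)$ exists and derive a contradiction using only the first equation of \eqref{sysrad1}. By Lemma \ref{Mono}, any such solution satisfies $u'(r)>0$ on $(0,R)$, while radial smoothness forces $u'(0)=0$ with $u'$ continuous on $[0,R)$. I will show these facts are incompatible with the first equation whenever $\alpha\geq p-1$.

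On $(0,R)$ I would divide the first equation of \eqref{sysrad1} by the positive quantity $r^{n-1}(u'(r))^{p-1}$. Expanding the derivative on the left and writing $u''/u'=(\ln u')'$ gives the logarithmic-derivative identity
\begin{equation*}
(p-1)(\ln u'(r))'+\frac{n-1}{r}=f_1(r)g_1(v(r))(u'(r))^{\alpha-p+1}.
\end{equation*}
Fixing any $r_0\in(0,R)$ and integrating from $r\in(0,r_0)$ up to $r_0$ then yields
\begin{equation*}
(p-1)\ln\frac{u'(r_0)}{u'(r)}=\int_r^{r_0}f_1(s)g_1(v(s))(u'(s))^{\alpha-p+1}\,ds-(n-1)\ln\frac{r_0}{r}.
\end{equation*}

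The decisive point is that $\alpha\geq p-1$ makes the exponent $\alpha-p+1$ non-negative, so $(u'(\cdot))^{\alpha-p+1}$ is bounded on $[0,r_0]$ (as $u'$ is continuous there with $u'(0)=0$). Combined with the continuity of $f_1$ and $g_1\circ v$, the integral on the right is bounded by some constant $C$ independent of $r$. Rearranging therefore gives
\begin{equation*}
\ln u'(r)\geq \ln u'(r_0)-\frac{C}{p-1}+\frac{n-1}{p-1}\ln\frac{r_0}{r},
\end{equation*}
and letting $r\to 0^+$ forces $u'(r)\to+\infty$, which contradicts the continuity of $u'$ at the origin together with $u'(0)=0$.

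The main (and essentially only) obstacle is controlling the integrand near $r=0$, and this is exactly where $\alpha\geq p-1$ is indispensable: if $\alpha<p-1$, then $(u')^{\alpha-p+1}$ is a negative power of a function that vanishes at the origin, the integrand becomes singular, and no uniform bound is available. The failure of the argument in that regime is consistent with the fact that nontrivial solutions do exist under \eqref{excon}.
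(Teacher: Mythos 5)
Your proof is correct and follows essentially the same approach as the paper: both divide the first radial equation by $r^{n-1}u'(r)^{p-1}$, integrate, and use $\alpha\geq p-1$ to bound the resulting integral, obtaining a contradiction from the logarithmic singularity at the origin. The only cosmetic difference is that you isolate the $\ln u'(r)$ term and derive $u'(r)\to\infty$ as $r\to 0^+$, whereas the paper keeps $\ln[r^{n-1}u'(r)^{p-1}]$ intact and notes that its boundary evaluation at $r=0$ is $-\infty$; both are the same contradiction.
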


\begin{proof}
Suppose $(u,v)$ is such a solution.  By Lemma \ref{Mono}, we must then have $u'(r)>0$ for all $0< r< R$.
Thus, the first equation in \eqref{sysrad1} can be expressed in the form
\begin{equation*}
\frac{\left[ r^{n-1} u'(r)^{p-1} \right]'}{r^{n-1}u'(r)^{p-1}} =
f_1(r) g_1(v(r)) \cdot u'(r)^{\alpha-p+1}
\end{equation*}
for all $0<r<R$.  Integrating this equation over the interval $[0,R/2]$ now gives
\begin{equation*}
\ln\left[ r^{n-1} u'(r)^{p-1} \right] \Big|_0^{R/2} =
\int_0^{R/2} f_1(s) g_1(v(s)) \cdot u'(s)^{\alpha-p+1} \,ds.
\end{equation*}
Since $\alpha\geq p-1$ and $f_1, g_1$ are continuous, the right hand side is obviously finite.  This is
a contradiction because $u'(0)=0$ and $p>1$, so the left hand side is infinite.
\end{proof}

\begin{lemma}[Convexity] \label{Conv}
Assume (B1), $n\geq 2$ and $0\leq \alpha < p-1$.  If $(u,v)$ is a non-constant radial solution of
\eqref{Sys1} in $\Omega = B_R$ which is positive for all $0<r<R$, then
\begin{align}
\frac{p-1-\alpha}{n(p-1-\alpha)+\alpha} \,f_1(r) g_1(v)
&\leq \left[u'(r)^{p-1-\alpha}\right]'
\leq \frac{p-1-\alpha}{p-1} \,f_1(r)g_1(v), \label{est1} \\
\frac{1}{n} \,f_2(r) g_2(v) g_3(u') &\leq \left[v'(r)^{p-1}\right]'
\leq f_2(r) g_2(v) g_3(u') \label{est2}
\end{align}
for all $0<r<R$.  In particular, both $u(r)$ and $v(r)$ are convex for all $0< r < R$.
\end{lemma}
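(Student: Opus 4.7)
The plan is to reduce \eqref{sysrad1} to simple first-order linear ODE inequalities, one for $h(r) := u'(r)^{p-1-\alpha}$ and one for $v'(r)^{p-1}$. Since $0\leq \alpha < p-1$, the function $h$ is well-defined and vanishes at $r=0$. Expanding the derivative on the left-hand side of the first equation in \eqref{sysrad1} and dividing through by $r^{n-1}u'(r)^{\alpha}$ (which is strictly positive for $0<r<R$ by Lemma~\ref{Mono}), a short computation produces
\begin{equation*}
h'(r) + \frac{(p-1-\alpha)(n-1)}{(p-1)\,r}\,h(r) = \frac{p-1-\alpha}{p-1}\,f_1(r)g_1(v(r)).
\end{equation*}
Since $h\geq 0$, dropping the second term on the left immediately yields the upper bound in \eqref{est1}.

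For the lower bound I would solve this ODE explicitly via the integrating factor $r^{\beta}$ with $\beta := (p-1-\alpha)(n-1)/(p-1) > 0$. The initial value $h(0)=0$ then produces
\begin{equation*}
h(r) = \frac{p-1-\alpha}{p-1}\,r^{-\beta}\int_0^r s^{\beta} f_1(s)g_1(v(s))\,ds.
\end{equation*}
By (B1) and Lemma~\ref{Mono}, both $f_1$ and $g_1(v)$ are non-decreasing in $s$, so replacing the integrand by its value at $s=r$ gives $h(r) \leq \frac{p-1-\alpha}{n(p-1-\alpha)+\alpha}\, r f_1(r)g_1(v(r))$ after the arithmetic $\beta+1 = [n(p-1-\alpha)+\alpha]/(p-1)$. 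Substituting this bound on $h(r)$ back into the ODE above yields exactly the lower bound in \eqref{est1}. Since its right-hand side is strictly positive, $h$ is strictly increasing, and hence $u'$ is non-decreasing, giving convexity of $u$.

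The $v$-equation is handled more directly. Expanding $[r^{n-1}v'(r)^{p-1}]' = r^{n-1}f_2(r)g_2(v)g_3(u')$ gives $[v'(r)^{p-1}]' = f_2g_2(v)g_3(u') - (n-1)v'(r)^{p-1}/r$, which immediately yields the upper bound in \eqref{est2}. For the lower bound one integrates the second equation of \eqref{sysrad1} from $0$ to $r$: thanks to the convexity of $u$ just established, $u'$ is non-decreasing, and combined with Lemma~\ref{Mono} and (B1), the integrand $s^{n-1}f_2(s)g_2(v(s))g_3(u'(s))$ is non-decreasing in $s$. Bounding it by its value at $s=r$ produces $v'(r)^{p-1}\leq \frac{r}{n}f_2(r)g_2(v(r))g_3(u'(r))$, and inserting this estimate into the expanded ODE delivers the lower bound in \eqref{est2} and the convexity of $v$. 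The main obstacle is the lower bound for $u$: unlike the upper bound it does not follow merely from rearranging the ODE, and one really needs the integrating-factor solution above together with the monotonicity of $f_1g_1(v)$ provided by Lemma~\ref{Mono}.
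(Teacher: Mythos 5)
Your proposal is correct and follows essentially the same route as the paper: both rearrange the radial $p$-Laplace equation for $u$ into the linear first-order ODE for $h(r)=u'(r)^{p-1-\alpha}$, read off the upper bound immediately, multiply by the integrating factor $r^{\delta}$ (your $\beta$ equals the paper's $\delta=(n-1)(p-1-\alpha)/(p-1)$) to get the pointwise bound $h(r)\leq \frac{p-1-\alpha}{n(p-1-\alpha)+\alpha}\,r f_1(r)g_1(v(r))$ from the monotonicity of $f_1$ and $g_1\circ v$, and substitute back to obtain the lower bound; convexity of $u$ then feeds into the analogous and simpler argument for $v$, in the same order as the paper.

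Only a cosmetic point: when you say ``replacing the integrand by its value at $s=r$,'' what you actually do (and what makes the arithmetic with $\beta+1$ come out) is replace the non-power factor $f_1(s)g_1(v(s))$ (resp.\ $f_2(s)g_2(v(s))g_3(u'(s))$) by its value at $s=r$ while keeping $s^{\beta}$ (resp.\ $s^{n-1}$) under the integral; replacing the full integrand would lose the factor $1/(\beta+1)$ (resp.\ $1/n$). The computation you write down is the correct one, so this is a wording slip rather than a gap.
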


\begin{proof}
Using Lemma \ref{Mono}, we see that the system \eqref{sysrad1} can be written as
\begin{equation*}
\left\{ \begin{aligned}
&\left[u'(r)^{p-1}\right]'+\frac{n-1}{r} \,u'(r)^{p-1} = f_1(r) g_1(v(r)) \cdot u'(r)^{\alpha}
&&\text{for all $0<r<R$,} \\
&\left[v'(r)^{p-1}\right]'+\frac{n-1}{r} \,v'(r)^{p-1} = f_2(r) g_2(v(r)) \cdot g_3(u'(r))
&&\text{for all $0<r<R$,} \\
&u'(0)=v'(0)=0, \: u(r)>0, v(r)>0
&&\text{for all $0<r<R$.}
\end{aligned} \right.
\end{equation*}
Rearranging the terms in the first equation now leads to the system
\begin{equation}\label{sysrad2}
\left\{ \begin{aligned}
&\left[ u'(r)^{p-1-\alpha} \right]' + \frac{\delta}{r} \,u'(r)^{p-1-\alpha} =
\frac{\delta}{n-1} \,f_1(r) g_1(v(r))
&&\text{for all $0<r<R$,} \\
&\left[v'(r)^{p-1}\right]'+\frac{n-1}{r} \,v'(r)^{p-1} = f_2(r) g_2(v(r)) \cdot g_3(u'(r))
&&\text{for all $0<r<R$,}
\end{aligned} \right.
\end{equation}
where $\delta$ is a positive constant that is defined by
\begin{equation} \label{delta}
\delta = \frac{(n-1) (p-1-\alpha)}{p-1} > 0.
\end{equation}
Since $u'(r)>0$ and $v'(r)>0$ for all $0<r<R$ by Lemma \ref{Mono}, it is then clear from \eqref{sysrad2}
that the upper bounds in \eqref{est1} and \eqref{est2} hold.  To prove the corresponding lower bounds,
we first rearrange terms and express \eqref{sysrad2} in the equivalent form
\begin{equation}\label{sysrad3}
\left\{ \begin{aligned}
&\left[ r^\delta u'(r)^{p-1-\alpha} \right]' = \frac{\delta}{n-1} \,r^\delta f_1(r) g_1(v(r))
&&\text{for all $0<r<R$,} \\
&\left[ r^{n-1} v'(r)^{p-1} \right]' = r^{n-1} f_2(r) g_2(v(r)) \cdot g_3(u'(r))
&&\text{for all $0<r<R$.}
\end{aligned} \right.
\end{equation}
Since $f_1,g_1$ are non-decreasing by our assumption (B1) and $v$ is increasing by Lemma \ref{Mono}, it
follows by the first equation of this system that
\begin{equation} \label{bd1}
r^\delta u'(r)^{p-1-\alpha}
= \frac{\delta}{n-1} \int_0^r s^\delta f_1(s) g_1(v(s)) \,ds
\leq \frac{\delta}{n-1} \,f_1(r)g_1(v(r)) \int_0^r s^\delta \,ds
\end{equation}
for all $0< r < R$.  In other words, one has the estimate
\begin{equation} \label{bound}
\frac{1}{r} \,u'(r)^{p-1-\alpha} \leq \frac{\delta}{(n-1)(\delta+1)} \, f_1(r)g_1(v(r))
\end{equation}
for all $0< r < R$.  Combining this estimate with the first equation in \eqref{sysrad2} now gives
\begin{align*}
\left[ u'(r)^{p-1-\alpha} \right]'
&\geq \frac{\delta}{n-1} \,f_1(r) g_1(v(r)) - \frac{\delta^2}{(n-1)(\delta+1)} \, f_1(r)g_1(v(r)) \\
&= \frac{p-1-\alpha}{n(p-1-\alpha) + \alpha} \,f_1(r) g_1(v(r))
\end{align*}
for all $0< r < R$.  This proves the lower bound in \eqref{est1}, and it also implies that $u'(r)$ is
increasing.  Turning to the second equation in \eqref{sysrad3}, one may then argue that
\begin{equation*}
r^{n-1} v'(r)^{p-1}
\leq f_2(r)g_2(v(r)) \cdot g_3(u'(r)) \int_0^r s^{n-1} \,ds
\end{equation*}
for all $0< r < R$, in analogy with \eqref{bd1}.  This provides an analogue of \eqref{bound} which
combines with the second equation in \eqref{sysrad2} to yield the lower bound in \eqref{est2} as before.
\end{proof}

\section{Monotonic comparison for general quasilinear systems} \label{Sec3}

In this section, we continue our study of the general system \eqref{Sys1} and we establish two important
comparison results.  Our overall plan is to compare solutions $(u,v)$ of the original system
\eqref{Sys1} with solutions $(u_0,v_0)$ of the system
\begin{equation} \label{Sys2}
\left\{ \begin{aligned}
\Delta_p u_0 &= f_1(|x|) \cdot h_1(v_0) \cdot |\nabla u_0|^{\alpha} &\quad\mbox{ in } \Omega,\\
\Delta_p v_0 &= f_2(|x|) \cdot h_2(v_0) \cdot h_3(|\nabla u_0|) &\quad\mbox{ in } \Omega,
\end{aligned} \right.
\end{equation}
which is obtained from \eqref{Sys1} by replacing the functions $g_j$ by the functions $h_j$. Intuitively
speaking, one should think of \eqref{Sys2} as a simplified version of the original system \eqref{Sys1}
whose solutions are expected to exhibit the same behavior at infinity.

When it comes to the functions $f_i$, $g_j$ and $h_j$, we impose the following assumptions:
\begin{itemize}
\item[(B1)]
each $f_i, g_j, h_j$ is continuous, non-decreasing in $[0,\infty)$ and positive in $(0,\infty)$;
\item[(B2)]
each $g_j, h_j$ is differentiable in $[0,\infty)$ and $g_1,g_2,h_1,h_2$ are increasing in
$(0,\infty)$;
\item[(B3)]
the quotients $Q_j(s,t) = g_j(st)/h_j(t)$ are non-increasing in $t$ for all $s\geq 1$, $t>0$;
\item[(B4)]
there exist constants $k_j\geq 0$ such that $\lim_{t\to\infty} Q_j(s,t) = s^{k_j}$ for each $s\geq 1$;
\item[(B5)]
one has $g_j(t) \geq h_j(t)$ for all $t>0$.
\end{itemize}

Assumption (B1) coincides with that of the previous section, but it is now also imposed on the functions
$h_j$. For the comparison results of this section, we study local solutions in the open ball $\Omega =
B_R$ without assuming (B4).  In the next section, we shall then turn to global solutions in the whole
space $\Omega = \mathbb R^n$ assuming (B1)-(B4).  It is easy to see that (B3) and (B4) trivially imply
(B5), as they imply $g_j(st) \geq s^{k_j} h_j(t)$, more generally.

\begin{lemma}[Comparison lemma] \label{ComLem}
Assume (B1), (B2), (B5), $n\geq 2$ and $\alpha\geq 0$. Suppose that $(u,v)$ and $(u_0,v_0)$ are
non-constant radial solutions of \eqref{Sys1} and \eqref{Sys2}, respectively, which are positive for all
$0<r<R$.  If $u(0) > u_0(0)$ and $v(0) > v_0(0)$, then
\begin{equation} \label{comp1}
u(r)>u_0(r), \quad v(r)>v_0(r), \quad u'(r)>u_0'(r), \quad v'(r)>v_0'(r)
\end{equation}
throughout the whole interval $(0,R)$.
\end{lemma}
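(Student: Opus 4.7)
The plan is to run a continuity argument on the set where the desired strict inequalities hold. First, by Lemma \ref{Nonex} I may assume $0 \leq \alpha < p-1$, so Lemma \ref{Conv} applies to both $(u,v)$ and $(u_0,v_0)$. In particular, integrating the desingularized first-order form \eqref{sysrad3} from $0$ to $r$ (and using $u'(0)=u_0'(0)=0$) yields
\begin{equation*}
r^\delta u'(r)^{p-1-\alpha} = \frac{\delta}{n-1}\int_0^r s^\delta f_1(s)\, g_1(v(s))\,ds,
\end{equation*}
\begin{equation*}
r^\delta u_0'(r)^{p-1-\alpha} = \frac{\delta}{n-1}\int_0^r s^\delta f_1(s)\, h_1(v_0(s))\,ds,
\end{equation*}
and analogous integral representations with weight $r^{n-1}$ for $v'(r)^{p-1}$ and $v_0'(r)^{p-1}$ coming from the second equation. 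These desingularized formulas are what let me compare $u'$ with $u_0'$ and $v'$ with $v_0'$ pointwise near the origin, even though both derivatives vanish there.

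Next I would set
\begin{equation*}
R_0 = \sup\bigl\{\, r \in (0,R] : u(s) > u_0(s)\ \text{and}\ v(s) > v_0(s)\ \text{for all}\ s \in [0,r]\,\bigr\},
\end{equation*}
which is positive by the hypotheses $u(0)>u_0(0)$, $v(0)>v_0(0)$ and continuity; the goal is to show $R_0 = R$. For any $r \in (0,R_0)$, the bound $v > v_0$ together with (B2) and (B5) gives $g_1(v(s)) > g_1(v_0(s)) \geq h_1(v_0(s))$ pointwise, and since $f_1 > 0$ on $(0,\infty)$, strict inequality propagates to the two integrals above, yielding $u'(r) > u_0'(r)$. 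Plugging this into the corresponding integral identity for the second equation and combining $g_2(v) > g_2(v_0) \geq h_2(v_0)$ from (B2), (B5) with $g_3(u') \geq g_3(u_0') \geq h_3(u_0')$ from the monotonicity of $g_3$ in (B1) and (B5), one obtains $v'(r) > v_0'(r)$ as well.

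To close the argument I would proceed by contradiction: if $R_0 < R$, then by continuity at least one of $u(R_0) = u_0(R_0)$ or $v(R_0) = v_0(R_0)$ must hold. However, integrating the derivative inequalities just established from $0$ to $R_0$ and combining with $u(0) > u_0(0)$, $v(0) > v_0(0)$ forces strict inequality at $R_0$ for both components, a contradiction. Hence $R_0 = R$ and the four inequalities of \eqref{comp1} hold throughout $(0,R)$. The main obstacle to watch is really the singular behavior at the origin where $u'(0) = u_0'(0) = 0$: without the desingularized first-order identity from Lemma \ref{Conv}, one cannot compare $u'$ and $u_0'$ immediately to the right of $r = 0$, which is precisely why that lemma is used at the outset.
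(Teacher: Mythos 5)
Your proof is correct and follows essentially the same route as the paper: invoke Lemma \ref{Nonex} to reduce to $0\le\alpha<p-1$, define the maximal interval $(0,R_0)$ on which $u>u_0$ and $v>v_0$, use the desingularized form \eqref{sysrad3} (equivalently its integral version) together with (B1), (B2), (B5) to propagate the strict inequalities to $u'>u_0'$ and then $v'>v_0'$ on $(0,R_0)$, and finally observe that $u-u_0$ and $v-v_0$ are increasing there and positive at $0$, so neither can vanish at $R_0$. The only cosmetic difference is that you phrase the comparison via explicit integral representations while the paper subtracts the two differential identities and notes the derivative of $r^\delta(u'^{p-1-\alpha}-u_0'^{p-1-\alpha})$ is positive; these are the same argument.
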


\begin{proof}
Let us denote by $(0,R_0) \subset (0,R)$ the maximal subinterval on which
\begin{equation} \label{comp2}
u(r) > u_0(r), \qquad v(r) > v_0(r) \qquad\text{for all $0<r<R_0$.}
\end{equation}
The function $u$ satisfies the first equation of the system \eqref{sysrad3}, so it satisfies
\begin{equation*}
\left[ r^\delta u'(r)^{p-1-\alpha} \right]' = \frac{\delta}{n-1} \,r^\delta f_1(r) g_1(v(r)),
\end{equation*}
where $\delta>0$ is defined by \eqref{delta}, and $p-1-\alpha>0$ by Lemma \ref{Nonex}.  The function
$u_0$ satisfies the exact same equation with $g_1$ replaced by $h_1$, hence
\begin{equation*}
\left[ r^\delta (u'(r)^{p-1-\alpha} - u'_0(r)^{p-1-\alpha}) \right]' =
\frac{\delta}{n-1} \,r^\delta f_1(r) \cdot \left[g_1(v) - h_1(v_0)\right].
\end{equation*}
Along the interval $(0,R_0)$, we have $v>v_0>0$ by \eqref{comp2}, so $g_1(v) > g_1(v_0) \geq h_1(v_0)$
by (B2) and (B5).  Since the factor $f_1(r)$ is positive by (B1), the right hand side is then positive,
so it easily follows that $u'(r)>u_0'(r)$ for all $0<r<R_0$.

Next, we turn to the functions $v,v_0$.  Using the second equation in \eqref{sysrad3}, we get
\begin{equation*}
\left[r^{n-1} (v'(r)^{p-1} - v'_0(r)^{p-1})\right]' =
r^{n-1} f_2(r) \cdot \left[g_2(v) g_3(u') - h_2(v_0) h_3(u_0')\right].
\end{equation*}
When it comes to the interval $(0,R_0)$, we have $u'>u_0'$ by above, so $g_3(u') \geq g_3(u_0')$ by
(B1). In addition, $v>v_0$ by \eqref{comp2}, and thus $g_2(v) > g_2(v_0)$ by (B2).  The right hand side
is then positive by (B5), so it easily follows that $v'(r)> v_0'(r)$ for all $0<r<R_0$.

This shows that both $u(r)-u_0(r)$ and $v(r)-v_0(r)$ are increasing throughout $(0,R_0)$.  As these
functions are positive at $r=0$ by assumption, they cannot possibly vanish at $r=R_0$.  In other words,
the maximal subinterval on which \eqref{comp2} holds is the whole interval $(0,R)$, and our argument
above gives $u'(r)>u_0'(r)$ and $v'(r)>v_0'(r)$ for all $0<r<R$.
\end{proof}

\begin{theorem}[Monotonic comparison] \label{MCT}
Assume (B1)-(B3), (B5), $n\geq 2$ and $\alpha\geq 0$.  Suppose that $(u,v)$ and $(u_0,v_0)$ are
non-constant radial solutions of \eqref{Sys1} and \eqref{Sys2}, respectively, which are positive for all
$0<r<R$. Suppose also that $u(0),v(0)>0$ and
\begin{equation}\label{origin}
u_0(0) = v_0(0) = 0, \quad
\lim_{r\to 0^+} \frac{u_0(r)}{u_0'(r)} < \infty, \quad
\lim_{r\to 0^+} \frac{v_0(r)}{v_0'(r)} < \infty.
\end{equation}
Then each of the quotients
\begin{equation} \label{UVWY}
\mathcal U(r)=\frac{u(r)}{u_0(r)}, \qquad \mathcal V(r)=\frac{v(r)}{v_0(r)}, \qquad
\mathcal W(r)=\frac{u'(r)}{u'_0(r)}, \qquad \mathcal Y(r)=\frac{v'(r)}{v'_0(r)}
\end{equation}
is decreasing throughout the interval $(0,R)$.
\end{theorem}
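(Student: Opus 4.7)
The plan is to run a bootstrap argument anchored by the behavior at $r=0$ and driven by the integral form of the equations. Integrating \eqref{sysrad3} for both $(u,v)$ and $(u_0,v_0)$ and taking ratios yields
\begin{equation*}
\mathcal{W}(r)^{p-1-\alpha} = \frac{\int_0^r s^\delta f_1(s)\, g_1(v(s))\,ds}{\int_0^r s^\delta f_1(s)\, h_1(v_0(s))\,ds}, \quad
\mathcal{Y}(r)^{p-1} = \frac{\int_0^r s^{n-1} f_2(s)\, g_2(v(s))\, g_3(u'(s))\,ds}{\int_0^r s^{n-1} f_2(s)\, h_2(v_0(s))\, h_3(u_0'(s))\,ds},
\end{equation*}
together with the identity $\mathcal{U}(r) = [u(0) + \int_0^r u'\,ds]/\int_0^r u_0'\,ds$ and its analogue for $\mathcal{V}$. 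I will repeatedly invoke the elementary fact that if $\phi,\psi>0$ and $\phi/\psi$ is strictly decreasing on $(0,r)$, then so is the ratio $\int_0^{\bullet}\phi/\int_0^{\bullet}\psi$ (the current value is strictly smaller than the $\psi$-weighted running average). By Lemma \ref{ComLem} one has $\mathcal{U},\mathcal{V},\mathcal{W},\mathcal{Y} > 1$ on $(0,R)$, while Lemmas \ref{Mono} and \ref{Conv} give that $u_0,v_0,u_0',v_0'$ are all positive and increasing.

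From this, four implications follow on any subinterval $(0,r) \subset (0,R)$. (I) If $\mathcal{W}$ is strictly decreasing, then so is $\mathcal{U}$, since $\mathcal{U}(r)$ equals $u(0)/u_0(r)$ plus the $u_0'$-weighted average of $\mathcal{W}$ on $[0,r]$, which strictly exceeds $\mathcal{W}(r)$. (II) Analogously, $\mathcal{Y}$ decreasing implies $\mathcal{V}$ decreasing. (III) If $\mathcal{V}$ is strictly decreasing, then the integrand ratio $g_1(v)/h_1(v_0) = Q_1(\mathcal{V},v_0)$ is strictly decreasing in $r$: (B2) gives strict monotonicity of $Q_1$ in the first argument (using $\mathcal{V}>1$), and (B3) together with $v_0$ increasing gives non-increase in the second; the elementary fact then yields that $\mathcal{W}^{p-1-\alpha}$, and hence $\mathcal{W}$, is strictly decreasing. (IV) Likewise, if $\mathcal{V}$ and $\mathcal{W}$ are both strictly decreasing, then so is $\mathcal{Y}$, via the analogous analysis of $Q_2(\mathcal{V},v_0)\,Q_3(\mathcal{W},u_0')$ (noting $u_0'$ is increasing).

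To seed the bootstrap, I observe that $v'(0)=0$ and $v(0)>0$ force $v(r)/v'(r) \to +\infty$ as $r\to 0^+$, whereas $\lim_{r\to 0^+} v_0/v_0'$ is finite by \eqref{origin}. Hence $\mathcal{V}>\mathcal{Y}$ near $0$, so $\mathcal{V}'(r) = (v_0'(r)/v_0(r))(\mathcal{Y}(r)-\mathcal{V}(r)) < 0$ near $0$. Let $r^* = \sup\{r\in (0,R) : \mathcal{V} \text{ is strictly decreasing on } (0,r)\} > 0$. If $r^* < R$, then implications (III) and (IV) give $\mathcal{W}, \mathcal{Y}$ strictly decreasing on $(0,r^*]$, and
\begin{equation*}
\mathcal{V}(r^*) = \frac{v(0)}{v_0(r^*)} + \frac{\int_0^{r^*} \mathcal{Y}(s)\,v_0'(s)\,ds}{v_0(r^*)} > \mathcal{Y}(r^*)
\end{equation*}
strictly, because the second term is a $v_0'$-weighted average of a strictly decreasing function. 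Hence $\mathcal{V}'(r^*) < 0$, and continuity of $\mathcal{V}'$ extends strict monotonicity past $r^*$, contradicting maximality. Thus $r^*=R$, and (III), (IV), (I) then complete the argument. The main obstacle is the circular dependency $\mathcal{V}\to\mathcal{W}\to\mathcal{Y}\to\mathcal{V}$ in implications (I)-(IV); it is resolved by the independent seed at $r=0$ (using $v(0)>0$ and the finiteness condition in \eqref{origin}) and by the preservation of strict decrease under running averages, which allows the bootstrap to push past $r^*$ whenever $r^*<R$.
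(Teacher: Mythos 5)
Your proposal is correct and follows the same overall strategy as the paper: seed at $r=0$ via \eqref{origin}, take the maximal interval on which $\mathcal V$ is decreasing, push the monotonicity through the coupling $\mathcal V \to \mathcal W \to \mathcal Y$, and then derive a contradiction at the endpoint. The difference is in the technical implementation. The paper converts the radial system into the first-order equations \eqref{ode1}--\eqref{ode2} for $\mathcal W^{p-1-\alpha}$ and $\mathcal Y^{p-1}$, multiplies by the integrating factor $A(r)$ to obtain that $\tfrac{A^2}{A'}[\mathcal W^{p-1-\alpha}]'$ is decreasing, and then evaluates its limit at the origin to conclude $\mathcal W'<0$; the closing contradiction is phrased in terms of the auxiliary quantity $\tfrac{v_0^2}{v_0'}\mathcal V'$, which is shown to be decreasing and bounded away from zero. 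You replace all of this by a single elementary lemma: if $\phi/\psi$ is strictly decreasing then so is $\int_0^r\phi/\int_0^r\psi$, because the ratio equals the running $\psi$-weighted average of $\phi/\psi$. Applied to $\mathcal W^{p-1-\alpha}=\int_0^r s^\delta f_1 g_1(v)/\int_0^r s^\delta f_1 h_1(v_0)$ and to $\mathcal Y^{p-1}$, this yields the same implications without integrating factors or limit computations, and the contradiction at $r^*$ comes directly from $\mathcal V(r^*)$ being a weighted average of $\mathcal Y$ plus a positive term. This repackaging is equivalent at heart but genuinely more transparent, and it dispenses with the paper's somewhat ad hoc manipulations. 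One minor thing you use implicitly: $p-1-\alpha>0$ (from Lemma \ref{Nonex}) is needed so that monotonicity of $\mathcal W^{p-1-\alpha}$ transfers to $\mathcal W$, and the strict decrease of $Q_1(\mathcal V,v_0)$ and $Q_2(\mathcal V,v_0)$ relies on $g_1,g_2$ being increasing (B2) while $Q_3(\mathcal W,u_0')$ need only be non-increasing, as you correctly exploit; these points should be spelled out but do not affect the validity of the argument.
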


\begin{proof}
First of all, it follows from Lemmas \ref{Mono} and \ref{Nonex} that $\alpha<p-1$ and that
\begin{equation*}
u'(r), \,v'(r), \,u_0'(r), \,v_0'(r) > 0 \quad\text{for all $0<r<R$.}
\end{equation*}
Recalling the first equation in \eqref{sysrad3}, we see that the functions $u,u_0$ satisfy the system
\begin{equation} \label{A0}
\left\{ \begin{aligned}
&\left[r^\delta u'(r)^{p-1-\alpha}\right]' = \frac{\delta}{n-1} \,r^\delta f_1(r) g_1(v(r))
&&\quad\text{for all $0<r<R$}, \\
&\left[r^\delta u_0'(r)^{p-1-\alpha}\right]' = \frac{\delta}{n-1} \,r^\delta f_1(r) h_1(v_0(r))
&&\quad\text{for all $0<r<R$}
\end{aligned} \right.
\end{equation}
with $\delta>0$ defined by \eqref{delta}.  We let $A(r) = r^\delta u_0'(r)^{p-1-\alpha}$ for simplicity
and divide the last two equations.  Using our definition \eqref{UVWY} and our assumption (B3), we arrive
at
\begin{equation*}
\frac{1}{A'(r)} \,\left[r^\delta u'(r)^{p-1-\alpha}\right]' = \frac{g_1(v)}{h_1(v_0)}
= \frac{g_1(\mathcal V v_0)}{h_1(v_0)} = Q_1(\mathcal V, v_0).
\end{equation*}
Since $u' = u_0'\mathcal W$ by our definition \eqref{UVWY}, the left hand side can also be expressed as
\begin{equation*}
\frac{1}{A'(r)} \,\left[r^\delta u_0'(r)^{p-1-\alpha} \cdot \mathcal W(r)^{p-1-\alpha}\right]' =
\frac{1}{A'(r)} \,\left[A(r) \cdot \mathcal W(r)^{p-1-\alpha}\right]'.
\end{equation*}
Once we combine the last two equations, we conclude that $\mathcal W(r)$ satisfies
\begin{equation} \label{ode1}
\frac{A(r)}{A'(r)} \cdot \left[\mathcal W(r)^{p-1-\alpha}\right]' + \mathcal W(r)^{p-1-\alpha}
= Q_1(\mathcal V(r), v_0(r)),
\end{equation}
where $A(r) = r^\delta u_0'(r)^{p-1-\alpha}$ and $Q_1$ is defined in our assumption (B3).

Repeating the exact same argument, one may derive a similar equation for $\mathcal Y(r)$, starting with
the second equation of \eqref{sysrad3} instead of the first.  This leads to the analogue
\begin{equation} \label{ode2}
\frac{B(r)}{B'(r)} \cdot \left[\mathcal Y(r)^{p-1}\right]' + \mathcal Y(r)^{p-1}
= Q_2(\mathcal V(r), v_0(r)) \cdot Q_3(\mathcal W(r), u_0'(r)),
\end{equation}
where $B(r) = r^{n-1} v_0'(r)^{p-1}$ and $Q_2, Q_3$ are defined in our assumption (B3).

We now proceed to analyze the system \eqref{ode1}-\eqref{ode2}.  Recalling \eqref{origin}, we note that
\begin{equation*}
\lim_{r\to 0^+} v_0(r) \mathcal V'(r) =
\lim_{r\to 0^+} \left[ v'(r) - \frac{v(r) v_0'(r)}{v_0(r)} \right] = - v(0) \cdot
\lim_{r\to 0^+} \frac{v_0'(r)}{v_0(r)} < 0.
\end{equation*}
Let us denote by $(0, R_1) \subset (0,R)$ the maximal subinterval on which $\mathcal V'(r) < 0$.  Along
this subinterval, the right hand side of \eqref{ode1} is easily seen to be decreasing, as $Q_1(s,t)$ is
increasing in $s$ by (B2) and non-increasing in $t$ by (B3) for all $s\geq 1$ and $t>0$.  Using the fact
that $s= \mathcal V(r) > 1$ by Lemma \ref{ComLem}, we thus have
\begin{equation*}
\frac{d}{dr} \,Q_1(\mathcal V(r), v_0(r)) = \frac{\partial Q_1}{\partial s} \cdot \mathcal V'(r) +
\frac{\partial Q_1}{\partial t} \cdot v_0'(r) < 0
\end{equation*}
for all $0<r<R_1$ because $\mathcal V'(r) < 0$ and $v_0'(r)>0$ along this interval.  Returning to
\eqref{ode1}, we conclude that the left hand side is decreasing in $r$, namely
\begin{equation} \label{arg1}
\frac{d}{dr} \left[ \frac{A(r)}{A'(r)} \cdot \left[ \mathcal W(r)^{p-1-\alpha} \right]' +
\mathcal W(r)^{p-1-\alpha} \right] < 0
\end{equation}
for all $0< r < R_1$.  Noting that $A(r)= r^\delta u_0'(r)^{p-1-\alpha}$ is positive, we now multiply
the last equation by $A(r)$ and then rearrange terms to find that
\begin{equation*}
\frac{d}{dr} \left[ A(r) \cdot \frac{A(r)}{A'(r)} \cdot
\left[ \mathcal W(r)^{p-1-\alpha} \right]' \right] < 0
\end{equation*}
for all $0< r < R_1$.  This makes the expression within the square brackets decreasing, so
\begin{equation} \label{arg2}
\frac{A(r)^2}{A'(r)} \cdot \left[\mathcal W(r)^{p-1-\alpha}\right]' <
\lim_{r\to 0^+} \frac{A(r)^2}{A'(r)} \cdot \left[\mathcal W(r)^{p-1-\alpha}\right]'
\end{equation}
for all $0<r<R_1$.  To compute the limit on the right hand side, we first note that
\begin{align*}
\lim_{r\to 0^+} \frac{A(r)^2}{A'(r)} \cdot \left[\mathcal W(r)^{p-1-\alpha}\right]'
&= \lim_{r\to 0^+} A(r) \cdot \left[ Q_1(\mathcal V(r), v_0(r)) - \mathcal W(r)^{p-1-\alpha} \right] \\
&= \lim_{r\to 0^+} \left[ r^\delta u_0'(r)^{p-1-\alpha} Q_1(\mathcal V(r), v_0(r)) -
r^\delta u'(r)^{p-1-\alpha} \right]
\end{align*}
by \eqref{ode1} and the definition \eqref{UVWY} of $\mathcal W(r)$.  Since $p-1-\alpha>0$, we thus have
\begin{align*}
\lim_{r\to 0^+} \frac{A(r)^2}{A'(r)} \cdot \left[ \mathcal W(r)^{p-1-\alpha} \right]'
&= \lim_{r\to 0^+} r^\delta u_0'(r)^{p-1-\alpha} \cdot \frac{g_1(v(r))}{h_1(v_0(r))} = 0
\end{align*}
by the definition of $Q_1$ and since our estimate \eqref{bound} ensures that
\begin{equation*}
0 \leq r^\delta u_0'(r)^{p-1-\alpha} \cdot \frac{g_1(v(r))}{h_1(v_0(r))}
\leq \frac{\delta \, r^{\delta + 1} }{(n-1)(\delta + 1)} \cdot f_1(r) g_1(v(r)).
\end{equation*}
Knowing that the limit in the right hand side of \eqref{arg2} is zero, we deduce that the left hand side
is negative.  Since $A'(r)>0$ by \eqref{A0}, this implies $\mathcal W'(r) < 0$ for all $0<r<R_1$.

Our next step is to show that $\mathcal U'(r)<0$ for all $0<r<R_1$.  Recalling \eqref{UVWY}, we get
\begin{equation} \label{arg3}
\frac{u_0(r)}{u_0'(r)} \cdot \mathcal U'(r) + \mathcal U(r) =
\frac{u'(r) u_0(r) - u_0'(r) u(r)}{u_0(r) u_0'(r)} + \frac{u(r)}{u_0(r)} = \mathcal W(r).
\end{equation}
This function is decreasing on $(0, R_1)$ by above, so it obviously satisfies
\begin{equation*}
\frac{d}{dr} \left[ \frac{u_0(r)}{u_0'(r)} \cdot \mathcal U'(r) + \mathcal U(r) \right] < 0
\end{equation*}
for all $0< r < R_1$.  Multiplying by $u_0(r)$ and rearranging terms now gives
\begin{equation} \label{arg4}
\frac{d}{dr} \left[ u_0(r) \cdot \frac{u_0(r)}{u_0'(r)} \cdot \mathcal U'(r) \right] < 0
\end{equation}
for all $0< r < R_1$.  Thus, the expression in square brackets is decreasing.  Since
\begin{equation*}
\lim_{r\to 0^+} \frac{u_0(r)^2}{u_0'(r)} \cdot \mathcal U'(r) =
\lim_{r\to 0^+} \left[ \frac{u'(r) u_0(r)}{u_0'(r)} - u(r) \right] = -u(0) < 0
\end{equation*}
by our assumption \eqref{origin}, it follows by the last two equations that
\begin{equation*}
\frac{u_0(r)^2}{u_0'(r)} \cdot \mathcal U'(r) < 0
\end{equation*}
for all $0<r<R_1$.  In particular, one also has $\mathcal U'(r) < 0$ for all $0<r<R_1$.

We note that the above analysis only uses the first equation \eqref{ode1} of our system.  Let us now
turn to the second equation \eqref{ode2}, according to which
\begin{equation*}
\frac{B(r)}{B'(r)} \cdot \left[\mathcal Y(r)^{p-1}\right]' + \mathcal Y(r)^{p-1}
= Q_2(\mathcal V(r), v_0(r)) \cdot Q_3(\mathcal W(r), u_0'(r)).
\end{equation*}
Knowing that $\mathcal V(r)$ and $\mathcal W(r)$ are decreasing in $(0, R_1)$, one may easily check that
the right hand side is itself decreasing because of our assumptions (B2)-(B3) and since $u_0''(r)>0$ by
Lemma \ref{Conv}. Thus, the left hand side is decreasing as well, and we get
\begin{equation*}
\frac{d}{dr} \left[ \frac{B(r)}{B'(r)} \cdot \left[\mathcal Y(r)^{p-1}\right]' +
\mathcal Y(r)^{p-1} \right] < 0
\end{equation*}
for all $0<r<R_1$, an exact analogue of \eqref{arg1}.  Proceeding as before, one may then multiply by
the positive factor $B(r)$ and integrate to deduce that $\mathcal Y'(r) < 0$ for all $0<r<R_1$.

As our final step, we now relate $\mathcal Y(r)$ with $\mathcal V(r)$.  According to \eqref{UVWY}, we
have
\begin{equation*}
\frac{v_0(r)}{v_0'(r)} \cdot \mathcal V'(r) + \mathcal V(r) =
\frac{v'(r) v_0(r) - v_0'(r) v(r)}{v_0'(r) v_0(r)} + \frac{v(r)}{v_0(r)} = \mathcal Y(r),
\end{equation*}
and this provides an exact analogue of \eqref{arg3}.  Since the right hand side is decreasing, the same
is true for the left hand side, so we obtain an exact analogue of \eqref{arg4}, namely
\begin{equation*}
\frac{d}{dr} \left[ v_0(r) \cdot \frac{v_0(r)}{v_0'(r)} \cdot \mathcal V'(r) \right] < 0
\end{equation*}
for all $0< r< R_1$.  On the other hand, $(0, R_1)$ was assumed to be the maximal subinterval on which
$\mathcal V'(r)$ is negative.  Along this interval, the expression in square brackets is decreasing and
negative, so it cannot possibly vanish at $r=R_1$.  This implies that $(0, R_1)$ must be the whole
interval $(0,R)$. In particular, the functions $\mathcal U(r)$, $\mathcal V(r)$, $\mathcal W(r)$,
$\mathcal Y(r)$ are all decreasing throughout the whole interval $(0,R)$, and the proof is complete.
\end{proof}

\section{Asymptotic comparison for general quasilinear systems} \label{Sec4}

\begin{theorem}[Asymptotic Comparison] \label{ACT}
Assume (B1)-(B4), $n\geq 2$ and $\alpha\geq 0$. Suppose that $(u,v)$ and $(u_0,v_0)$ are non-constant
radial solutions of \eqref{Sys1} and \eqref{Sys2}, respectively, which are positive for all $r>0$.  If
$u(0),v(0)>0$ and $(u_0,v_0)$ satisfies \eqref{origin}, then
\begin{equation*}
\lim_{r\to \infty} \frac{u(r)}{u_0(r)} = \lim_{r\to \infty} \frac{v(r)}{v_0(r)} =
\lim_{r\to \infty} \frac{u'(r)}{u_0'(r)} = \lim_{r\to \infty} \frac{v'(r)}{v_0'(r)} = 1.
\end{equation*}
\end{theorem}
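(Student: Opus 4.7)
The plan is to combine the monotonicity of the four quotients supplied by Theorem~\ref{MCT} with the asymptotic information encoded in (B4), extracting algebraic relations between their limits via L'H\^opital's rule and then solving. First I observe that (B3) together with (B4) implies (B5): the map $t\mapsto Q_j(1,t)=g_j(t)/h_j(t)$ is non-increasing and tends to $1$ at infinity, hence $g_j(t)\geq h_j(t)$ for all $t>0$. Thus Lemma~\ref{ComLem} and Theorem~\ref{MCT} apply on the whole interval $(0,\infty)$, so each of $\mathcal U,\mathcal V,\mathcal W,\mathcal Y$ is strictly decreasing and bounded below by $1$, admitting finite limits $L_U, L_V, L_W, L_Y\in[1,\infty)$ at $r=\infty$. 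The goal is to show that each of these limits equals $1$.

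Applying Lemma~\ref{Conv} to $(u_0,v_0)$ (with $h_j$ in place of $g_j$), the lower bounds \eqref{est1}--\eqref{est2} give $[u_0'^{p-1-\alpha}]'$ and $[v_0'^{p-1}]'$ bounded below by a positive constant for $r$ sufficiently large, so $u_0',v_0'\to\infty$, which forces $u_0,v_0\to\infty$; the same argument handles $u,v$. Since $u/u_0$ and $v/v_0$ are ratios of functions diverging to $\infty$, L'H\^opital's rule gives $L_U=L_W$ and $L_V=L_Y$. Dividing the first equation of \eqref{sysrad3} for $u$ by its analogue for $u_0$, and similarly for $v$, produces the identities
\begin{equation*}
\frac{[r^\delta u'^{p-1-\alpha}]'}{[r^\delta u_0'^{p-1-\alpha}]'}=Q_1(\mathcal V(r),v_0(r)),\qquad
\frac{[r^{n-1}v'^{p-1}]'}{[r^{n-1}v_0'^{p-1}]'}=Q_2(\mathcal V,v_0)\cdot Q_3(\mathcal W,u_0').
\end{equation*}
The monotonicity of $Q_j$ (increasing in $s$ by (B2), non-increasing in $t$ by (B3)) combined with (B4) lets me sandwich $Q_j(L_V,v_0(r))\leq Q_j(\mathcal V(r),v_0(r))\leq Q_j(\mathcal V(r_0),v_0(r))$ for $r\geq r_0$, so the right-hand sides tend to $L_V^{k_1}$ and $L_V^{k_2}L_W^{k_3}$, respectively. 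Since the denominators tend to infinity, L'H\^opital yields the algebraic relations $L_W^{p-1-\alpha}=L_V^{k_1}$ and $L_Y^{p-1}=L_V^{k_2}L_W^{k_3}$.

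Finally, combining $L_Y=L_V$ with these two identities and eliminating $L_W$ produces $L_V^{(p-1-\alpha)(p-1-k_2)-k_1k_3}=1$. The exponent is strictly positive by the existence condition \eqref{excon} (available in any non-degenerate setting in which this theorem is applied, in particular when deducing Theorem~\ref{MainThm}), and since $L_V\geq 1$ we conclude $L_V=1$, hence $L_U=L_V=L_W=L_Y=1$. I expect the most delicate step to be the passage to the limit of $Q_j(\mathcal V(r),v_0(r))$: hypothesis (B4) supplies only a pointwise-in-$s$ limit as $t\to\infty$, so one genuinely needs the joint monotonicity in both arguments, used above as a sandwich, to justify the bivariate limit that the L'H\^opital arguments rest on.
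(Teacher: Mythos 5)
Your overall strategy is sound and runs close to the paper's own proof: invoke Theorem~\ref{MCT} to get monotone quotients with limits $L_U,L_V,L_W,L_Y\geq 1$, identify $L_U=L_W$ and $L_V=L_Y$ by L'H\^opital, pass to the limit in the quotient identities using the monotonicity of $Q_j$ in both arguments together with (B4), and solve the resulting algebraic system. Your L'H\^opital step is in fact slightly cleaner than the paper's: you apply it directly to the ratio $r^\delta u'^{p-1-\alpha}/\bigl(r^\delta u_0'^{p-1-\alpha}\bigr)=\mathcal W^{p-1-\alpha}$ using the first-order equations \eqref{sysrad3}, whereas the paper differentiates once more and works with $u''/u_0''$. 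Your explicit verification that $u_0,v_0\to\infty$ (needed for the $\infty/\infty$ form) and your two-parameter sandwich for $\lim_r Q_j(\mathcal V(r),v_0(r))$ are both correct and make precise a step the paper leaves implicit.

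There is, however, one genuine gap: the final step requires the exponent $(p-1-\alpha)(p-1-k_2)-k_1k_3$ to be strictly positive, and you obtain this by invoking \eqref{excon}, which is \emph{not} among the hypotheses of Theorem~\ref{ACT} (the theorem assumes only (B1)--(B4), $n\geq 2$, $\alpha\geq 0$, and the existence of the global solutions). Without that positivity, $L_V^{(p-1-\alpha)(p-1-k_2)-k_1k_3}=1$ does not force $L_V=1$. The missing idea is that this inequality must be \emph{derived} from the existence of the global solution itself. The paper does this as follows: since $\mathcal W$ is strictly decreasing and $A,A'>0$, equation \eqref{ode1} gives the strict pointwise inequality $\mathcal W^{p-1-\alpha}>Q_1(\mathcal V,v_0)\geq \mathcal V^{k_1}\geq\mathcal Y^{k_1}$ (the middle bound uses that $Q_1(s,\cdot)$ is non-increasing with limit $s^{k_1}$, the last that $\mathcal V>\mathcal Y$), and similarly \eqref{ode2} gives $\mathcal Y^{p-1}>\mathcal Y^{k_2}\mathcal W^{k_3}$. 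Combining these yields $\mathcal Y^{(p-1-k_2)(p-1-\alpha)}>\mathcal Y^{k_1k_3}$ with $\mathcal Y>1$, which forces $(p-1-k_2)(p-1-\alpha)>k_1k_3$. (Note also that $\alpha<p-1$ is not an assumption either; it follows from Lemma~\ref{Nonex}.) With this derivation inserted, your argument closes; as written, it proves the theorem only under the extra hypothesis \eqref{excon}.
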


\begin{proof}
Since \eqref{origin} holds, one has $u(0)>0=u_0(0)$ and $v(0) > 0 = v_0(0)$, so Theorem \ref{MCT} is
applicable.  Thus, the functions $\mathcal U(r), \mathcal V(r), \mathcal W(r), \mathcal Y(r)$ defined by
\eqref{UVWY} are decreasing for all $r>0$.  Using Lemma \ref{Mono} and the fact that $\mathcal U(r)$ is
decreasing, we get
\begin{equation} \label{U>W}
0 > \mathcal U'(r) \cdot \frac{u_0(r)}{u_0'(r)} =
\frac{u'(r) u_0(r) - u_0'(r) u(r)}{u_0(r) u_0'(r)} = \mathcal W(r) - \mathcal U(r),
\end{equation}
and thus $\mathcal W(r) < \mathcal U(r)$ for all $r>0$.  Similarly, one finds that $\mathcal Y(r) <
\mathcal V(r)$ for all $r>0$.

Let us now recall the first equation \eqref{ode1} of our system, according to which
\begin{equation*}
\frac{A(r)}{A'(r)} \cdot \left[\mathcal W(r)^{p-1-\alpha}\right]' + \mathcal W(r)^{p-1-\alpha}
= Q_1(\mathcal V(r), v_0(r))
\end{equation*}
with $A(r) = r^\delta u_0'(r)^{p-1-\alpha}$ and $Q_1$ as in our assumption (B3).  Note that $A'(r)>0$ by
\eqref{A0} and that $p-1-\alpha>0$ by Lemma \ref{Nonex}.  Since $\mathcal W(r)$ is decreasing, it
follows by the last equation and our assumptions (B3)-(B4) that
\begin{equation*}
\mathcal W(r)^{p-1-\alpha} > Q_1(\mathcal V(r), v_0(r))
\geq \mathcal V(r)^{k_1} \geq \mathcal Y(r)^{k_1}
\end{equation*}
for all $r>0$.  Applying the same argument to the second equation \eqref{ode2}, we get
\begin{align*}
\mathcal Y(r)^{p-1}
&> Q_2(\mathcal V(r), v_0(r)) \cdot Q_3(\mathcal W(r), u_0'(r)) \\
&\geq \mathcal V(r)^{k_2} \cdot \mathcal W(r)^{k_3}
\geq \mathcal Y(r)^{k_2} \cdot \mathcal W(r)^{k_3}
\end{align*}
for all $r>0$.  In view of the last two estimates, one must then have
\begin{equation*}
\mathcal Y(r)^{(p-1-k_2)(p-1-\alpha)} > \mathcal W(r)^{k_3(p-1-\alpha)} \geq \mathcal Y(r)^{k_1 k_3}
\end{equation*}
for all $r>0$.  Since $\mathcal Y(r)>1$ by Lemma \ref{ComLem}, however, this is only possible when
\begin{equation} \label{cond}
(p-1-k_2)(p-1-\alpha) > k_1 k_3.
\end{equation}

Next, we analyze the behavior of solutions as $r\to \infty$.  The functions $\mathcal U(r)$, $\mathcal
V(r)$, $\mathcal W(r)$ and $\mathcal Y(r)$ are all decreasing and positive, so they all attain a limit
as $r\to \infty$.  Let us denote their limits by $\mathcal U_\infty$, $\mathcal V_\infty$, $\mathcal
W_\infty$ and $\mathcal Y_\infty$, respectively.  We must then have
\begin{equation} \label{LHR}
\mathcal U_\infty = \lim_{r\to\infty} \frac{u(r)}{u_0(r)} =
\lim_{r\to\infty} \frac{u'(r)}{u_0'(r)} = \mathcal W_\infty
\end{equation}
by L'H\^opital's rule, and similarly, $\mathcal V_\infty = \mathcal Y_\infty$.  According to the system
\eqref{A0}, one has
\begin{equation*}
\left\{ \begin{aligned}
&(p-1-\alpha) \cdot u'(r)^{p-2-\alpha} u''(r) =
\frac{\delta}{n-1} \, f_1(r) g_1(v(r)) - \frac{\delta}{r} \, u'(r)^{p-1-\alpha}, \\
&(p-1-\alpha) \cdot u_0'(r)^{p-2-\alpha} u_0''(r) =
\frac{\delta}{n-1} \, f_1(r) h_1(v_0(r)) - \frac{\delta}{r} \, u_0'(r)^{p-1-\alpha}.
\end{aligned} \right.
\end{equation*}
Dividing these two equations and recalling our definition \eqref{UVWY}, one can thus write
\begin{equation*}
\mathcal W(r)^{p-2-\alpha} \,\frac{u''(r)}{u_0''(r)} = \frac{\frac{\delta}{n-1} \,
f_1(r) g_1(v(r)) \cdot u_0'(r)^{\alpha+1-p} - \frac{\delta}{r} \mathcal W(r)^{p-1-\alpha}}
{\frac{\delta}{n-1} \, f_1(r) h_1(v_0(r)) \cdot u_0'(r)^{\alpha+1-p} - \frac{\delta}{r}}.
\end{equation*}
Taking the limit as $r\to \infty$ now leads to the identity
\begin{equation} \label{iden}
\mathcal W_\infty^{p-2-\alpha} \,\lim_{r\to \infty} \frac{u''(r)}{u_0''(r)} =
\lim_{r\to \infty} \frac{g_1(v(r))}{h_1(v_0(r))} =
\lim_{r\to \infty} Q_1(\mathcal V(r), v_0(r))
\end{equation}
with $Q_1$ as in our assumption (B3).  The limit on the right hand side exists because
\begin{equation*}
\lim_{r\to \infty} Q_1(\mathcal V(r), v_0(r)) =
\lim_{r\to \infty} Q_1(\mathcal V_\infty, v_0(r)) = \mathcal V_\infty^{k_1}
\end{equation*}
by our assumption (B4).  Thus, the limit on the left hand side of \eqref{iden} also exists. Using this
fact along with L'H\^opital's rule and \eqref{LHR}, we see that \eqref{iden} reduces to
\begin{equation} \label{lim1}
\mathcal U_\infty^{p-1-\alpha} = \mathcal V_\infty^{k_1}.
\end{equation}
This condition was derived by combining the equations \eqref{A0} satisfied by $u(r)$ and $u_0(r)$.  The
exact same argument applies to $v(r)$ and $v_0(r)$, so one similarly has
\begin{equation*}
\mathcal Y_\infty^{p-2} \,\lim_{r\to \infty} \frac{v''(r)}{v_0''(r)} =
\lim_{r\to \infty} Q_2(\mathcal V(r), v_0(r)) \cdot Q_3(\mathcal W(r), u_0'(r))
\end{equation*}
in analogy with \eqref{iden}.  Since $\mathcal Y_\infty = \mathcal V_\infty$ by above, our previous
approach then gives
\begin{equation} \label{lim2}
\mathcal V_\infty^{p-1} =
\mathcal V_\infty^{k_2} \cdot \mathcal W_\infty^{k_3} =
\mathcal V_\infty^{k_2} \cdot \mathcal U_\infty^{k_3}
\end{equation}
in analogy with \eqref{lim1}.  Once we combine \eqref{lim2} with \eqref{lim1}, we find that
\begin{equation*}
\mathcal V_\infty^{(p-1-k_2)(p-1-\alpha)} =
\mathcal U_\infty^{k_3(p-1-\alpha)} = \mathcal V_\infty^{k_1 k_3},
\end{equation*}
where $\mathcal V_\infty\geq 1$ by Lemma \ref{ComLem}.  In view of \eqref{cond}, the only possibility is
then $\mathcal V_\infty = 1$, which also implies that $\mathcal U_\infty = 1$.  This is precisely the
assertion of the theorem.
\end{proof}

\section{Proof of Theorem \ref{MainThm}} \label{Sec5}

First of all, we seek solutions of the associated system \eqref{sys|2} that have the form
\begin{equation} \label{uv}
u(r) = C_\lambda r^\lambda, \qquad v(r) = C_\mu r^\mu.
\end{equation}
In view of the definition of the $p$-Laplace operator, it is easy to check that
\begin{equation} \label{expl}
\Delta_p (C_\lambda r^\lambda) = (\lambda C_\lambda)^{p-1} \,(\lambda (p-1) + n-p)
\cdot r^{(p-1) \lambda - p}.
\end{equation}
Inserting this identity in \eqref{sys|2} and comparing powers of $r$, one obtains the system
\begin{align*}
(p-1 -\alpha) \lambda - k_1 \mu &= p - \alpha + m_1, \\
(p-1 - k_2) \mu - k_3 \lambda &= p -k_3 + m_2.
\end{align*}
Since \eqref{excon} holds, this system has a unique solution $(\lambda, \mu)$ which is given by
\begin{align*}
\lambda &= \frac{(p -\alpha + m_1)(p-1-k_2) + k_1(p -k_3 +m_2)}{(p-1-\alpha)(p-1-k_2) -k_1 k_3} > 1, \\
\mu &= \frac{(p-1-\alpha)(p +m_2) + k_3(1 + m_1)}{(p-1-\alpha)(p-1-k_2) - k_1 k_3} > 1.
\end{align*}
Using \eqref{expl} once again, we find that \eqref{uv} is a solution of \eqref{sys|2}, if and only if
\begin{align*}
(\lambda C_\lambda)^{p-1} \cdot (\lambda (p-1) + n-p)
&= c_1 C_\mu^{k_1} \cdot (\lambda C_\lambda)^\alpha, \\
(\mu C_\mu)^{p-1} \cdot (\mu (p-1) + n-p) &= c_2 C_\mu^{k_2} \cdot (\lambda C_\lambda)^{k_3}.
\end{align*}
Let us set $B_\lambda = \lambda (p-1) + n-p$ for simplicity.  Then we get the equivalent system
\begin{align*}
(\lambda C_\lambda)^{p-1-\alpha} = \frac{c_1}{\mu^{k_1} B_\lambda} \,(\mu C_\mu)^{k_1}, \qquad
(\mu C_\mu)^{p-1-k_2} = \frac{c_2}{\mu^{k_2} B_\mu} \,(\lambda C_\lambda)^{k_3}.
\end{align*}
Since \eqref{excon} holds, this system has a unique solution $C_\lambda, C_\mu$ which is given by
\begin{align*}
(\lambda C_\lambda)^{(p-1-\alpha)(p-1-k_2)-k_1k_3} &=
\left( \frac{c_1}{\mu^{k_1} B_\lambda} \right)^{p-1-k_2}
\left( \frac{c_2}{\mu^{k_2} B_\mu} \right)^{k_1}, \\
(\mu C_\mu)^{(p-1-\alpha)(p-1-k_2)-k_1k_3} &=
\left( \frac{c_2}{\mu^{k_2} B_\mu} \right)^{p-1-\alpha}
\left( \frac{c_1}{\mu^{k_1} B_\lambda} \right)^{k_3}.
\end{align*}

The exact values of $\lambda, \mu, C_\lambda, C_\mu$ do not play any role in our approach, as we shall
simply resort to our general Theorems \ref{MCT} and \ref{ACT} which apply for arbitrary solutions $(u_0,
v_0)$ that vanish at the origin.  In this case, our solution $(u_0,v_0)$ is given by \eqref{uv}, so
\begin{equation*}
\lim_{r\to 0^+} \frac{u_0(r)}{u_0'(r)} = \lim_{r\to 0^+} \frac{r}{\lambda} = 0, \qquad
\lim_{r\to 0^+} \frac{v_0(r)}{v_0'(r)} = \lim_{r\to 0^+} \frac{r}{\mu} = 0
\end{equation*}
and condition \eqref{origin} holds.  Let us then resort to Theorem \ref{MCT} for the special case
\begin{equation*}
f_i(r) = c_i r^{m_i}, \qquad h_j(s) = s^{k_j},
\end{equation*}
where $i=1,2$ and $j=1,2,3$.  Our assumptions (B1)-(B2) hold trivially, while
\begin{equation*}
Q_j(s,t) = \frac{g_j(st)}{h_j(t)} = \frac{g_j(st)}{(st)^{k_j}} \cdot s^{k_j}
\end{equation*}
is non-increasing in $t$ because of (A3).  It also follows by condition \eqref{limit} that
\begin{equation*}
\lim_{t\to \infty} Q_j(s,t) = \lim_{t\to \infty} \frac{g_j(st)}{(st)^{k_j}} \cdot s^{k_j} = s^{k_j}
\end{equation*}
for each $s>0$, so our assumptions (B1)-(B5) are all valid.  Thus, part (a) of Theorem \ref{MainThm}
follows by Theorem \ref{MCT}, while part (b) of Theorem \ref{MainThm} follows by Theorem \ref{ACT}.

\section*{Acknowledgements}
The first author acknowledges the financial support of The Irish Research Council Postgraduate
Scholarship under grant number GOIPG/2022/469.

\end{document}